\newtheorem{thm}{Theorem}[section]
\newtheorem{lem}[thm]{Lemma}
\newtheorem{cor}[thm]{Corollary}
\newtheorem{defn}[thm]{Definition}
\newtheorem{ex}[thm]{Example}
\newtheorem{rem}[thm]{Remark}
\newcommand{\skipit}[1]{{}}
\newcommand{\prfend}{\hbox to7pt{\hfil}
\par\vskip-\baselineskip\hbox to\hsize
{\hfil\vbox {\hrule width6pt height6pt}}\vskip\baselineskip}
\newcommand{\myarrow}[2]{\hbox to #1pt{\hfil$\to$\hfil}{\hskip-#1pt{\raise
10pt\hbox to#1pt{\hfil$\scriptscriptstyle #2$\hfil}}}}
\begin{document}

\title{\Large Forcing the Strong Lefschetz \\and the Maximal Rank Properties}
\author{\large Fabrizio Zanello \tiny and \large Jeffery V. Zylinski}
\address{Department of Mathematical Sciences, Michigan
Technological University, Houghton, MI 49931}
\email{zanello@math.kth.se, jvzylins@mtu.edu}

\maketitle
\markboth{F. Zanello and J. Zylinski}{Forcing the Strong Lefschetz and the Maximal Rank Properties}
\begin{abstract}
Three basic properties which standard graded artinian $k$-algebras may or may not enjoy are the Weak and Strong Lefschetz Properties and the Maximal Rank Property (respectively WLP, SLP, and MRP). 

In this paper we will assume that the base field $k$ has characteristic zero. It is known that SLP implies MRP, which in turn implies WLP, but that both implications are strict. However, it surprisingly turned out (see \cite{HMNW}) that the set of Hilbert functions admitting {\em any} algebras with WLP coincides with the corresponding set for SLP (and therefore with that for  MRP). In \cite{MZ1}, J. Migliore and the first author, using Green's theorem and a result of Wiebe, characterized the Hilbert functions forcing {\em  all} algebras to enjoy WLP. The purpose of this note is to prove the corresponding characterizations for both SLP and MRP. Unsurprisingly (or surprisingly??), the two characterizations coincide, but they define a class of Hilbert functions strictly smaller than that determined for WLP in \cite{MZ1}.

Our methods include the Herzog-Popescu theorem on quotients of $k$-algebras modulo a general form, a result of Wiebe, and gins  and stable ideals. At the end, we will also discuss the importance of assuming that the characteristic be zero, and we will exhibit a class of codimension 2 monomial complete intersections for which SLP (but not MRP) fails in positive characteristic.
\end{abstract}
\large
{\ }\\
\section{Introduction}

Let $A= \bigoplus_{i \geq 0} A_i=R/I$ be a standard graded $k$-algebra, where $R=k[x_1,...,x_r]$ is a polynomial ring in $r$ variables over a field $A_0=k$ of characteristic zero, $I$ is a homogeneous ideal of $R$, and $\deg(x_i)=1$ for all $i$ (i.e., $R=k[R_1]$, and therefore $A=k[A_1]$). The {\em Hilbert function} of $A$ is the function $H=H(A):$ $\mathbb{N}{\ }\rightarrow {\ } \mathbb{N}$ defined by $H(A,d) = \dim_k A_d$. $A$ is {\em artinian} if the ideal $I$ is $(x_1,...,x_r)$-primary, or equivalently, if the Hilbert function of $A$ is eventually 0.

The following are three very natural properties that an artinian algebra $A$ may enjoy: $A$ is said to have the {\em Weak Lefschetz Property} (WLP) if the map $\times L: A_i \rightarrow A_{i+1}$, where $L$ is a {\em general} linear form (that is, its coefficients belong to a  Zariski-open subset of the base field $k$), has maximal rank for all $i\geq 0$;  $A$ has the {\em Maximal Rank Property} (MRP) if, for any $d\geq 1$, the map $\times F: A_i \rightarrow A_{i+d}$,  where $F$ is a general form of degree $d$, has maximal rank for all $i\geq 0$; finally, $A$ has the {\em Strong Lefschetz Property} (SLP) if, for any $d\geq 1$, the map $\times L^d: A_i \rightarrow A_{i+d}$,  where $L$ is a general linear form, has maximal rank for all $i\geq 0$.

While both the Weak and Strong Lefschetz Properties have been extensively investigated in the literature (for a broad overview and the main results obtained so far, see for instance \cite{BZ,GHMS,HMNW,MM} and their bibliographies), the Maximal Rank Property - perhaps the most natural intermediate property between WLP and SLP - has only been more recently introduced in \cite{MM} by J. Migliore and R. Mir\'o-Roig. Indeed, SLP implies MRP by semicontinuity (since MRP is determined, by definition, by the existence of certain general forms, and these are open conditions - see \cite{MM}), and MRP is clearly stronger than WLP. It is known that none of the opposite implications hold true: In particular, \cite{MM} contains a very  interesting example of a level, type 3 algebra in characteristic zero enjoying MRP but not SLP: $k[x,y,z]/(x^3,y^3,z^3,(x+y+z)^3).$ (We only remark here that this example can be naturally extended to codimension 3 level algebras of any type $t\geq 3$.) 

However, the class of algebras enjoying MRP {\em but not} SLP does seem to be ``quite small'' in characteristic zero (unlike in positive characteristic - see Example \ref{eee}), even if the relationship between this two properties is still mostly unclear. Certainly, for instance, as we will see follows later from a theorem of Herzog-Popescu,  the two properties are equivalent for algebras quotients of  lex-segment ideals and, more generally, of stable ideals. One of the goals of this paper is to shed some light on the relationship intercurrent between MRP and SLP.

A broad and interesting problem in this area is to determine structural results on the Hilbert functions of artinian algebras enjoying any of the three properties above: See, for instance, \cite{HMNW,MM,MMN,MZ1,zanello2}, or for the same problem with a more specific focus on special classes of  algebras, such as Gorenstein or level, \cite{boij2,BZ,GHMS,harima,ikeda,Mi,MN,MZ2,Wi,zanello2}.

In particular, in \cite{HMNW}, the set of Hilbert functions admitting {\em at least one} algebra with WLP was characterized, as well as the corresponding set for SLP. Curiously, it turned out that these two sets coincide (and therefore coincide with that for MRP). In \cite{MZ1}, Migliore and the first author, instead, taking a different approach, characterized the Hilbert functions forcing {\em  all} algebras to enjoy WLP. In this note, we will prove the corresponding characterization for both SLP and MRP. Perhaps even more curiously, these two characterizations coincide, but they define a class of Hilbert functions strictly contained in that determined for WLP.

We will also see that the class of Hilbert functions forcing SLP or MRP on all algebras coincides with the class of Hilbert functions simply forcing the corresponding property on lex-segment algebras, just like with WLP (\cite{MZ1}). However, whereas it is also true that any Gotzmann algebra enjoying WLP forces all algebras with the same Hilbert function to enjoy it, this stronger fact is false with respect to SLP or MRP.

In the next section we will state some preliminary results  needed in Section 3, which contains the two characterizations (Theorems \ref{mrp} and \ref{slp}). We will be employing a few different methods, including the Herzog-Popescu theorem on the Hilbert functions of quotients of $k$-algebras modulo a general form \cite{HP}, a result of Wiebe \cite{Wi}, gins and stable ideals. We will need characteristic zero for some key arguments of this paper; at the end we will also discuss the importance of this assumption and show that even the simplest examples of codimension 2 monomial complete intersections - which all enjoy SLP in characteristic zero - may lose that property (but not MRP) in passing to positive characteristic.

\section{Preliminary results}

Let us now introduce some of the main definitions and results we will need in this note.

\begin{defn} \label{ddd} Let $n$ and $d$ be positive integers. The {\em $d$-binomial expansion of n} is $$n=n_{(d)}=\binom{n_d}{d}+\binom{n_{d-1}}{d-1}+\cdots +\binom{n_j}{ j},$$ where $n_d>n_{d-1}>...>n_j\geq j\geq 1$. Note that, under these hypotheses, the $d$-binomial expansion of $n$ is unique (e.g., see \cite{BH}, Lemma 4.2.6). Also, define, for $a$ and $b$ integers,
$$(n_{(d)})^a_b=\binom{n_d+a}{d+b}+\binom{n_{d-1}+a}{ d-1+b}+\cdots +\binom{n_j+a}{j+b},$$
where, as usual, $\binom{m}{n}=0$ whenever $m<n$ or $n<0$. Finally, for $0\leq c<d$, let $$n_{((d,c))}=\binom{n_d-c-1}{ d-c}+\binom{n_{d-1}-c-1}{ d-1-c}+\cdots +\binom{n_q-c-1}{ q-c}+\binom{n_c-c}{ 0},$$
where we define $q=j$ if $j>c$ and $q=c+1$ if $j\leq c$, and $\binom{n_c-c}{ 0}$ is set to equal 0 for $c<j$.
\end{defn}

The possible Hilbert functions that may occur for standard graded algebras are characterized by a well-known theorem of Macaulay's (see, e.g., \cite{BH}, Theorem  4.2.10); those possible sequences of integers are called {\em $O$-sequences}. It is easy to see  that an $O$-sequence having, in some degree $d$, a value $\leq d$, is  non-increasing from that degree on. Also, the Hilbert functions of artinian algebras are characterized as those $O$-sequences which are eventually 0.

From \cite{MZ1} we have the following characterization of the Hilbert functions forcing  WLP:

\begin{thm}(\cite{MZ1}, Theorem 5)\label{mz}
Let $H$: $1,h_1,h_2,...,h_e, h_{e+1}=0$ be an $O$-sequence, and let $t$ be the smallest integer such that $h_t\leq t$. Then {\em all} the artinian algebras having Hilbert function $H$ enjoy WLP if and only if, for all indices $i=1,2,...,t-1$, we have $$h_{i-1}=((h_i)_{(i)})^{-1}_{-1}.$$
\end{thm}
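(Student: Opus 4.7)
The plan is to treat the degrees $i \leq t-1$ (where $h_i > i$) and $i \geq t$ (where the Hilbert function is non-increasing) separately, relying on Green's Hyperplane Restriction Theorem in the first range and on Wiebe's gin-based criterion in the second. The arithmetic backbone of the argument is the identity
\[
h_i - ((h_i)_{(i)})^{-1}_{-1} = ((h_i)_{(i)})^{-1}_{0},
\]
which follows termwise from Pascal's rule applied to the $i$-binomial expansion of $h_i$, and which says that the hypothesis $h_{i-1} = ((h_i)_{(i)})^{-1}_{-1}$ is equivalent to $h_i - h_{i-1}$ coinciding with Green's upper bound on $\dim(A/LA)_i$.

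For sufficiency, let $A$ be any artinian algebra with Hilbert function $H$ and $L$ a general linear form. For $1 \leq i \leq t-1$, rank-nullity always yields $\dim(A/LA)_i \geq h_i - h_{i-1}$, while Green's theorem yields $\dim(A/LA)_i \leq ((h_i)_{(i)})^{-1}_{0}$. Under the hypothesis these two bounds coincide, so $\dim(A/LA)_i = h_i - h_{i-1}$ and the multiplication map $\times L \colon A_{i-1} \to A_i$ is injective. For $i \geq t$ I would pass to the generic initial ideal $J = \mathrm{gin}(I)$ in the degrevlex order, which preserves the Hilbert function and under which $\times L$ on $A$ becomes multiplication by $x_r$ on $R/J$; Wiebe's criterion then identifies WLP with a combinatorial condition on the stable monomial structure of $J$, and in the non-increasing regime of $H$ the rigidity forced by the hypothesis at earlier degrees propagates to guarantee surjectivity of $\times L$.

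For necessity, suppose the binomial equality fails at some $i_0 \leq t-1$, so $h_{i_0 - 1} > ((h_{i_0})_{(i_0)})^{-1}_{-1}$. I would exhibit the lex-segment algebra $L(H) = R/L_H$ as a witness to the failure of WLP. Lex ideals are extremal for Green's theorem, so Green's bound is achieved with equality on $L(H)$ in every degree: $\dim(L(H)/L \cdot L(H))_{i_0} = ((h_{i_0})_{(i_0)})^{-1}_{0}$, and hence $\mathrm{rank}(\times L \colon L(H)_{i_0 - 1} \to L(H)_{i_0}) = ((h_{i_0})_{(i_0)})^{-1}_{-1}$. Under the failure hypothesis this rank is strictly less than $h_{i_0 - 1}$, and since $h_{i_0} > i_0$ forces $((h_{i_0})_{(i_0)})^{-1}_{-1} < h_{i_0}$ via the leading binomial term, it is also strictly less than $h_{i_0}$; so the map is not of maximal rank and WLP fails on $L(H)$.

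The main technical obstacle is the sufficiency at degrees $i \geq t$: once $h_i \leq i$, the Green bound no longer matches rank-nullity as cleanly, and one must unpack Wiebe's result together with the stable-ideal structure of the gin to control $\times L$ on an arbitrary $A$ with the prescribed Hilbert function. The delicate point is that although the hypothesis constrains $H$ only at degrees less than $t$, it pins down enough of the gin to force surjectivity of multiplication by $x_r$ in all higher degrees; making this propagation precise, and simultaneously verifying that a single general $L$ can be chosen to work in both the Green and the Wiebe regimes, is the technical heart of the argument.
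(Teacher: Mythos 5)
First, a caveat on the comparison: the paper does not prove this statement at all --- it is quoted verbatim as Theorem 5 of \cite{MZ1} --- so your attempt can only be measured against the proof in that reference, which rests on exactly the two pillars you identify: Green's hyperplane restriction theorem and the extremality of lex-segment ideals for Green's bound. Your Pascal-rule identity $h_i-((h_i)_{(i)})^{-1}_{-1}=((h_i)_{(i)})^{-1}_{0}$, the sufficiency argument for $1\leq i\leq t-1$ (rank-nullity lower bound meets Green's upper bound, forcing injectivity of $\times L\colon A_{i-1}\to A_i$), and the necessity argument via the lex-segment algebra are all correct and are essentially the arguments of \cite{MZ1}. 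One small point in the necessity direction: you should justify that failure of the equality gives $h_{i_0-1}>((h_{i_0})_{(i_0)})^{-1}_{-1}$ rather than merely $\neq$; this is the Macaulay--Bigatti--Geramita lower bound on $h_{i-1}$ in terms of $h_i$ for $O$-sequences, the same circle of facts you already invoke to get $((h_{i_0})_{(i_0)})^{-1}_{-1}<h_{i_0}$.

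The one genuine gap is precisely the part you single out as ``the technical heart'': sufficiency in degrees $i\geq t$. You leave it unresolved and propose to attack it with gins, Wiebe's criterion, and an unspecified ``propagation of rigidity,'' none of which is needed. The point you are missing is that Green's bound is already zero at degree $t$, unconditionally: since $h_t\leq t$, the $t$-binomial expansion of $h_t$ is $\binom{t}{t}+\binom{t-1}{t-1}+\cdots+\binom{t-h_t+1}{t-h_t+1}$, so $((h_t)_{(t)})^{-1}_{0}=\binom{t-1}{t}+\cdots+\binom{t-h_t}{t-h_t+1}=0$. Hence $\dim(A/LA)_t=0$ for general $L$, i.e., $\times L\colon A_{t-1}\to A_t$ is surjective for \emph{every} artinian $A$ with this Hilbert function; and since $A/LA$ is a standard graded algebra, $(A/LA)_t=0$ forces $(A/LA)_j=0$ for all $j\geq t$, so $\times L$ is surjective in all higher degrees as well (surjectivity being the correct maximal-rank condition there because $H$ is non-increasing from degree $t$ on). This disposes of all degrees $\geq t$ with no hypothesis whatsoever beyond $h_t\leq t$, which is exactly why the characterization only constrains $H$ in degrees $<t$. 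Your worry about choosing a single $L$ valid in both regimes is likewise a non-issue: one intersects finitely many dense open conditions.
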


\begin{rem}\label{rr}
i) Let $I$ be a homogeneous ideal of $R$  and  let $F\in R$ be any form of degree $d$ (not in $I$). Notice that there is the following exact sequence:
\begin{equation} \label {exact}
0 \rightarrow R/(I:F)(-d) \stackrel{\times F}{\longrightarrow} R/I \rightarrow R/(I,F) \rightarrow 0.
\end{equation}
ii) From (\ref{exact}) it follows that the Hilbert function of an algebra $A=R/I$ decomposes as the sum of the Hilbert function of $R/(I:F)$ (shifted by $d$ degrees to the right) and that of $R/(I,F)$.\\
iii) From the previous point we have that, if $A$ is artinian of {\em socle degree} $e$ (that is, $e$ is the last degree where $A_e\neq 0$), then the socle degree of $R/(I:F)$ is  $\leq e-d$. Notice, moreover, that if $F$ is the $d$-th power of a general linear form (or, by semicontinuity,  it is  a general form of degree $d$), then the socle degree of $R/(I:F)$ is {\em exactly} $e-d$. This easily follows from the classical fact that, {\em in characteristic zero}, $R_e$ is spanned by $e$-th powers of linear forms.
\end{rem}

We now rephrase  SLP and  MRP, in view of the exact sequence (\ref{exact}):

\begin{lem}\label{reph}
i) Let $A=R/I$ be an artinian algebra  with Hilbert function $H$: $h_0=1,h_1,h_2,...,\\h_e,h_{e+1}=0$. Then $A$ enjoys  SLP if and only if, for any index $d$ and for the $d$-th power $L^d$ of a general linear form $L$, we have, for all $i\geq 0$:
$$H(A/L^d,i)=\max \{h_i-h_{i-d} ,0\},$$
where we set $h_j=0$ for $j<0$.\\
ii) Similarly, $A=R/I$ enjoys  MRP if and only if, for any index $d$ and for a general form  $F$ of degree $d$, we have, for all  $i\geq 0$:
$$H(A/F,i)=\max \{h_i-h_{i-d} ,0\}.$$
\end{lem}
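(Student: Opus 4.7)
The plan is to derive both statements in a uniform way from the exact sequence (\ref{exact}), which gives, for any homogeneous form $F$ of degree $d$ and any $i$,
$$h_i \;=\; H(R/(I:F),\,i-d) \;+\; H(R/(I,F),\,i).$$
The key observation is that the image of multiplication $\times F: A_{i-d}\to A_i$ is naturally identified with $(I,F)_i/I_i$ modulo the relation coming from $F\cdot I_{i-d}$, and hence has dimension exactly $H(R/(I:F),i-d)$. Therefore, $H(A/F,i) = h_i - \dim\,\mathrm{Im}(\times F)$, and the question of when $\times F$ has maximal rank translates directly into the question of when $H(A/F,i)$ attains its minimum possible value.

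More precisely, since $\dim\,\mathrm{Im}(\times F)\le \min\{h_{i-d},h_i\}$ for any $F$, one always has
$$H(A/F,i)\;\ge\;h_i-\min\{h_{i-d},h_i\}\;=\;\max\{h_i-h_{i-d},\,0\},$$
with equality in degree $i$ if and only if $\times F$ has maximal rank from $A_{i-d}$ to $A_i$. From here, part (ii) is essentially immediate by applying the above to a general form $F$ of degree $d$ and letting $d$ and $i$ vary, using the definition of MRP. Part (i) follows identically once one takes $F=L^d$ with $L$ a general linear form, matching the definition of SLP.

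The only mild care needed is at small indices: for $i<d$ we have $A_{i-d}=0$, so $\times F$ is vacuously of maximal rank, and on the other side $H(A/F,i)=h_i=\max\{h_i-h_{i-d},0\}$ under the convention $h_j=0$ for $j<0$, so the equivalence holds trivially. No step is really the main obstacle; the content is essentially bookkeeping with (\ref{exact}), and the lemma is best viewed as a convenient reformulation that will be used repeatedly in Section 3.
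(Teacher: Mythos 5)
Your proposal is correct and follows exactly the route the paper intends: the lemma is stated as an immediate consequence of the exact sequence (\ref{exact}) from Remark \ref{rr}, and your dimension count identifying $H(A/F,i)$ with $h_i-\dim\Image(\times F)$ is precisely the bookkeeping the authors leave implicit. No substantive difference.
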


\section{The main results}

We are now ready for the first main result of this note, namely the characterization of the Hilbert functions forcing  MRP:

\begin{thm}\label{mrp}
Let $H$: $1,h_1=r,h_2,...,h_e, h_{e+1}=0$ be an $O$-sequence, and let $t$ be the smallest integer such that $h_t\leq t$. Then all the artinian algebras having Hilbert function $H$ enjoy  MRP if and only if:\\
i) $r=2$; or\\
ii) $r>2$, $h_t\leq 2$, and, for all indices $i=1,2,...,t-1$, $$h_{i-1}=((h_i)_{(i)})^{-1}_{-1}.$$
\end{thm}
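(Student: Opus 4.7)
The plan is to handle necessity and sufficiency separately, using the preliminary results as available tools.

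\emph{Necessity.} Suppose every algebra with Hilbert function $H$ has MRP. Since MRP implies WLP, Theorem~\ref{mz} gives the binomial condition $h_{i-1}=((h_i)_{(i)})^{-1}_{-1}$ for $i=1,\ldots,t-1$. If $r\leq 2$ we land in case (i); otherwise I must show $h_t\leq 2$, which I prove by exhibiting, for $r\geq 3$ and $h_t\geq 3$, an algebra with Hilbert function $H$ that fails MRP. I claim the lex-segment algebra $A=R/I$ with Hilbert function $H$ already does the job. Since monomials divisible by $x_1,\ldots,x_k$ come first in the lex order, a direct dimension count using $h_t\leq t$ and the inequality $\binom{h_t+t-2}{t}\geq h_t$ shows that $I_t$ automatically contains the entire subspace $(x_1,\ldots,x_k)R_{t-1}$ for $k=\max(1,\,r-h_t+1)$. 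Hence for any $F\in R_{t-1}$ we have $Fx_j\in I_t$ for $j=1,\ldots,k$, so $x_1,\ldots,x_k\in\ker(\times F:A_1\to A_t)$, giving rank at most $r-k<\min(r,h_t)$ and breaking MRP.

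\emph{Sufficiency, case (i).} For $r=2$, every standard graded artinian $k$-algebra in two variables enjoys SLP in characteristic zero (a classical fact), and SLP implies MRP.

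\emph{Sufficiency, case (ii).} Let $A=R/I$ have Hilbert function $H$ with $r\geq 3$, $h_t\leq 2$, and the binomial condition. By Lemma~\ref{reph}(ii), MRP reduces to showing, for every $d\geq 1$ and every general $F\in R_d$,
\[
H(A/F,i)=\max\{h_i-h_{i-d},\,0\}\qquad\text{for all } i.
\]
The Herzog-Popescu theorem identifies $H(R/(I,F))$ with $H(R/(J,x_r^d))$, where $J:=\mathrm{gin}(I)$ is the revlex generic initial ideal; since $J$ is stable with the same Hilbert function as $I$, it suffices to verify the identity for this quotient. The binomial condition pins down $J$ in degrees $<t$ via Gotzmann persistence (no ``surprise'' generators), handling the range $i+d\leq t-1$. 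The hypothesis $h_t\leq 2$ forces $J$, in degrees $\geq t$, to miss at most two monomials per degree, all involving only the last two variables; consequently past degree $t-1$ the quotient behaves like a codimension-$\leq 2$ algebra and the required identity for $i\geq t$ follows from case (i). The transition range $i<t\leq i+d$ is then completed by splitting the exact sequence~(\ref{exact}) at degree $t$ and matching dimensions, with Wiebe's result used to identify the correct rank of multiplication by the relevant power.

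The main obstacle is the transition range in case (ii), where $\times F$ straddles the ``WLP-controlled'' region below $t$ and the ``effectively two-variable'' region above $t$, and the two descriptions must glue consistently; this combines Wiebe's theorem with the combinatorics of stable ideals. The necessity construction also requires verifying that the lex-segment ideal has the right higher-degree Hilbert function, but that is automatic for lex ideals.
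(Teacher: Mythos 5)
Your necessity argument and your case (i) are essentially the paper's own: the paper also disposes of $r>2$, $h_t\geq 3$ by observing that for the lex-segment ideal $(R/I)_t$ is spanned by monomials in the last two variables only (so every $\times F\colon A_1\to A_t$ with $\deg F=t-1$ has rank at most $2<\min\{r,h_t\}$), and it also quotes \cite{HMNW}, Proposition 4.4 for $r=2$. Your count $k=\max(1,\,r-h_t+1)$ is a slightly roundabout but correct version of this.

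The sufficiency argument in case (ii), however, breaks at its very first step. You assert that ``the Herzog--Popescu theorem identifies $H(R/(I,F))$ with $H(R/(J,x_r^d))$'' for $J=\mathrm{gin}(I)$. That is not what Herzog--Popescu prove, and the statement is false. Their Lemma 1.4 says that $x_r^d$ behaves like a general degree-$d$ form \emph{for a stable ideal} $J$, i.e.\ $H(R/(J,x_r^d))=H(R/(J,G))$ for general $G$; it does not relate $R/(I,F)$ to $R/(J,\cdot)$. Indeed MRP does not pass to gins: for $A=k[x,y,z]/(x^3,y^3,z^3,(x+y+z)^3)$ (the example quoted in the introduction), $A$ has MRP but not SLP; by Wiebe's Proposition 2.8 its revlex gin $R/J$ also fails SLP, and since for stable ideals SLP and MRP coincide (by the very Lemma 1.4 you invoke, together with Lemma \ref{reph}), $R/J$ fails MRP. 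Hence $H(R/(I,F))$ and $H(R/(J,x_r^d))$ differ in general, and your reduction to stable ideals collapses. (That reduction is legitimate for SLP --- it is exactly how the paper proves Theorem \ref{slp} --- but not for MRP.) The paper's actual argument works directly with $A$: for $i+d\leq t-1$ it gets injectivity of $\times L^d$ by iterating the WLP maps $\times L$ (available by Theorem \ref{mz}) and then invokes semicontinuity; for $i+d\geq t$ it uses the \emph{other} Herzog--Popescu result, Theorem 3.7, namely the bound $H(A/F,p)\leq\sum_{0\leq c\leq d-1}H(A,p)_{((p,c))}$ for a general form $F$, whose right-hand side a binomial computation shows to vanish precisely because $h_p\leq 2$ and $p>d$. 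This gives surjectivity in all degrees $\geq t$ at once, so there is no ``transition range'' to glue; your appeals to ``Gotzmann persistence,'' ``matching dimensions,'' and an unspecified use of Wiebe's theorem do not substitute for this. As written, case (ii) of your sufficiency proof does not go through.
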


\begin{proof} Let us first show that if, given an artinian Hilbert function $H$,  MRP holds for all algebras $A$ with that Hilbert function, then $H$ has the form i) or ii). Suppose $r>2$. Since  MRP implies  WLP, by Theorem \ref{mz}
we have that $h_{i-1}=((h_i)_{(i)})^{-1}_{-1},$ for $i=1,2,...,t-1$. Hence it remains to show that $h_t\leq 2$. Suppose $ h_t\geq 3$. Thus, it suffices to prove that some algebra with Hilbert function $H$ fails to have MRP, and we will use $R/I$, where $I$ is the lex-segment ideal for the Hilbert function $H$.

This can be done in at least two ways: The first uses Wiebe's characterization for the Hilbert functions whose lex-segment algebras have SLP (Theorem 4.4 of \cite{Wi}). Indeed, that numerical characterization is the same as that of the statement of this theorem, and moreover, a result of Herzog-Popescu (\cite{HP}, Lemma 1.4) says that $x_r^d$ is a general form of degree $d$ with respect to stable ideals, and so in particular lex-segment ideals. Hence, by Lemma \ref{reph}, when it comes to lex-segments,  SLP is equivalent to MRP, and we are done. A second proof of this fact  is more direct and also illustrates the idea behind Wiebe's approach: Since $3\leq h_t\leq t$ and $I$ is lex-segment, then in degree $t$, $R/I$ is equal to $(R/I)_t=\langle \overline{x_{r-1}^{h_t-1}x_r^{t-h_t+1}},...,\overline{x_{r-1}x_r^{t-1}},\overline{x_r^t}  \rangle $. In particular, only the last two variables appear. Thus, it is easy to see that the multiplication by any form $F\in R_{t-1}$ between $A_1$ and $A_t$ has (the images of) all the variables in its kernel, except possibly for the the last two. Therefore, since $\dim_kA_t\geq 3$, surjectivity fails for that map, and so MRP for $R/I$, as desired.

Let us now suppose that the Hilbert function $H$ has the form described in i) or in ii). We want to prove that all algebras $A$ with such Hilbert function enjoy  MRP. If $r=2$, this is well-known, since all algebras (in characteristic zero) even have SLP (see \cite{HMNW}, Proposition 4.4). Thus, suppose  that $r>2$, and let $F\in R$ be a general form of any degree $d\geq 1$. We want to show that the map $\times F$ between $A_i$ and $A_{i+d}$ has maximal rank, for each $i\geq 0$.

Consider first the case $i+d\leq t-1$. Notice that the Hilbert function of $A$ is increasing until degree $t-1$, since by \cite{BG}, it follows that $((h_i)_{(i)})^{-1}_{-1}<h_i$; hence we want to show that  $\times F$ is injective in these degrees. Let $F=L^d$, where $L$ is a general linear form. Since by Theorem \ref{mz}, WLP holds for $A$, we have that each of the maps $\times L$ between $A_i$ and $A_{i+1}$, $A_{i+1}$ and $A_{i+2}$, ..., $A_{i+(d-1)}$ and $A_{i+d}$ are injective. Thus, $\times L^d$ is also injective, and so $A$ enjoys  SLP in these degrees. It follows by semicontinuity that $A$ also enjoys  MRP, as desired.

Now suppose that $t\leq i+d\leq e$. Hence, since $h_t\leq 2$, by Macaulay's theorem we have $h_{i+d}=\dim_kA_{i+d}\leq 2$. If $i=0$, then the map $\times F$ is clearly injective for any form $F$ of degree $d$ whose image is non-zero in $A_d$, and therefore  MRP between degrees 0 and $d$ again follows by semicontinuity.

Thus, let $i>0$. Since $h_{i+d}\leq 2$, Macaulay's theorem guarantees that $h_i\geq h_{i+d}$; hence we want to show that the map $\times F$  between $A_i$ and $A_{i+d}$ is surjective, where $F$ is a general form of degree $d$. But, by Lemma \ref{reph}, this is equivalent to proving that $H(A/F,i+d)=0$. A tedious but straightforward computation involving the binomial coefficients of Definition \ref{ddd} shows that under the current assumptions - namely, $h_{i+d}\in \lbrace 1,2\rbrace $ and $i+d> \deg(F)=d$ - we have:
 $$\sum_{0\leq c\leq d-1}H(A,i+d)_{((i+d,c))}=0.$$
(Notice that, expectedly, the same numerical result does not hold for $h_{i+d}\geq 3$.) Therefore, the conclusion immediately follows from Herzog-Popescu's Theorem 3.7 of \cite{HP}, which says that if $A=R/I$ (is any standard graded $k$-algebra) and $F\in R$ is a general form of degree $d$, then the Hilbert function of $A/F\simeq R/(I,F)$ satisfies, for all integers $p\geq d$:
$$H(A/F,p)\leq \sum_{0\leq c\leq d-1}H(A,p)_{((p,c))}.$$
\end{proof}

We are now ready  to prove the second main result of this note, namely the characterization of the Hilbert functions forcing SLP. As we noticed above, it will turn out to be exactly the same as that for  MRP, which instead describes a strictly smaller set of Hilbert functions than those forcing WLP. Curiously, even if SLP is a stronger condition than MRP, we will need the existence of MRP proved in the previous theorem to deduce SLP in some degrees.

\begin{thm}\label{slp}
Let $H$: $1,h_1=r,h_2,...,h_e, h_{e+1}=0$ be an $O$-sequence, and let $t$ be the smallest integer such that $h_t\leq t$. Then all the artinian algebras having Hilbert function $H$ enjoy SLP if and only if:\\
i) $r=2$; or\\
ii) $r>2$, $h_t\leq 2$, and, for all indices $i=1,2,...,t-1$, $$h_{i-1}=((h_i)_{(i)})^{-1}_{-1}.$$
\end{thm}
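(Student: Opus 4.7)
Plan: The ``only if'' direction is immediate: SLP implies MRP by semicontinuity, so if every algebra with Hilbert function $H$ enjoys SLP then every such algebra enjoys MRP, and Theorem \ref{mrp} gives conditions (i) or (ii).

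For ``if'', fix an artinian $A=R/I$ with Hilbert function $H$ satisfying the hypotheses. The case $r=2$ is classical (\cite{HMNW}, Proposition 4.4). Assume henceforth $r>2$ and condition (ii). By Theorem \ref{mrp}, $A$ enjoys MRP, and in particular WLP. In case (ii) the Hilbert function is strictly increasing on $[0,t-1]$ (by \cite{BG}) and satisfies $h_j\leq 2$ for $j\geq t$ (by Macaulay), so WLP forces $\times L: A_j\to A_{j+1}$ to be injective for $j\leq t-2$ and surjective for $j\geq t-1$ whenever $L$ is generic. Writing $\times L^d$ as the $d$-fold composition of such maps, maximal rank for $\times L^d: A_i\to A_{i+d}$ follows in every range where all intermediate factors share a type: the composition is injective when $i+d\leq t-1$ and surjective when $i\geq t-1$. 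The boundary case $i=0$, $d\geq t$ reduces to $L^d\notin I$, a dense open condition on $L$ since in characteristic zero the $d$-th powers of linear forms span $R_d$.

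The delicate remaining case is the mixed range $1\leq i\leq t-2$ with $i+d\geq t$. Here $h_i\geq h_1\geq 3>2\geq h_{i+d}$, so maximal rank amounts to surjectivity of $\times L^d: A_i\to A_{i+d}$, equivalently $H(R/(I,L^d),i+d)=0$. To establish this, pass to the generic initial ideal $J=\mathrm{gin}(I)$, a strongly stable (in characteristic zero) ideal with the same Hilbert function as $I$. For generic $L$, the standard initial-ideal inclusion $\mathrm{gin}(I+(L^d))\supseteq J+(x_r^d)$ yields
\[
H(R/(I,L^d),p)\ \leq\ H(R/(J,x_r^d),p).
\]
Because $J$ is stable, Herzog--Popescu's Lemma 1.4 of \cite{HP} says $x_r^d$ plays the role of a general form of degree $d$ with respect to $J$, so $H(R/(J,x_r^d),p)=H(R/(J,F),p)$ for generic $F\in R_d$, and their Theorem 3.7 bounds this by $\sum_{0\leq c\leq d-1}H(A,p)_{((p,c))}$. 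For $p=i+d>d$ and $h_{i+d}\in\{1,2\}$, this sum vanishes by the binomial identity already verified in the proof of Theorem \ref{mrp}, whence $H(R/(I,L^d),i+d)=0$ as required.

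The principal obstacle is the mixed range: merely factoring $\times L^d$ through single-degree multiplications does not suffice, since the composition of an injective with a surjective map need not have maximal rank. The gin-plus-Herzog--Popescu step is precisely what upgrades MRP to SLP, by reducing the question to a stable ideal on which $x_r^d$ behaves as a general $d$-th power and the bound already computed in Theorem \ref{mrp} applies.
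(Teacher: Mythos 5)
Your proof is correct, and it reaches the conclusion by a route that is recognizably built from the same ingredients as the paper's (generic initial ideals, strong stability in characteristic zero, Herzog--Popescu's Lemma 1.4, and Theorem \ref{mrp}), but assembled differently. The paper simply cites Wiebe's Proposition 2.8 --- $R/I$ has SLP if and only if $R/\mathrm{gin}(I)$ does --- and then observes that for the stable ideal $J=\mathrm{gin}(I)$ the form $x_r^d$ is general, so SLP for $R/J$ is literally MRP for $R/J$, which Theorem \ref{mrp} already supplies; three lines and done. You instead avoid Wiebe's equivalence entirely: you dispose of the purely increasing and purely non-increasing ranges by factoring $\times L^d$ through the WLP maps, handle $i=0$ by the spanning of $R_d$ by $d$-th powers, and in the mixed range you only need the \emph{one-directional} inequality $H(R/(I,L^d),p)\le H(R/(J,x_r^d),p)$, which suffices because the target value is $0$. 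This is more self-contained (it isolates exactly which direction of Wiebe's proposition is needed and proves it) at the cost of a case analysis the paper never has to make. One small imprecision: the containment you invoke should not be phrased as $\mathrm{gin}(I+(L^d))\supseteq J+(x_r^d)$, since the gin of $I+(L^d)$ is computed after a generic coordinate change applied to the whole ideal, under which $L^d$ does not become $x_r^d$. What is true, and what your displayed inequality actually needs, is that for generic $g$ with $\mathrm{in}_{\mathrm{revlex}}(gI)=J$ and $L=g^{-1}x_r$ one has $\mathrm{in}(gI+(x_r^d))\supseteq J+(x_r^d)$, whence the Hilbert function bound. With that rewording the argument is complete.
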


\begin{proof} The ``only if'' part immediately follows from Theorem \ref{mrp} and the fact that  SLP is stronger  than MRP. Hence suppose that $H$ has the form described in i) or ii), and let $A=R/I$ be any algebra with Hilbert function $H$. We want to show that $A$ has  SLP. 

Consider the generic initial ideal of $I$, say $J$, with respect to the reverse lexicographic order. It is  well-known that $R/I$ and $R/J$ have the same Hilbert function, and that $J$ is Borel-fixed. Since we are in characteristic zero, $J$ is also strongly stable and hence stable (see, e.g., \cite{Ei}, Section 15.9). Moreover, Wiebe showed in \cite{Wi}, Proposition 2.8 that $R/I$ enjoys  SLP if and only if $R/J$ does.

Thus, it suffices to show that all stable ideals with Hilbert function $H$ enjoy SLP. But we know that they all enjoy  MRP by Theorem \ref{mrp}; moreover, by \cite{HP}, Lemma 1.4, $x_r^d$ is a general form of degree $d$ with respect to stable ideals. Therefore, from Lemma \ref{reph} it immediately follows that SLP holds with respect to the powers of $L=x_r$, and the proof is complete.
\end{proof}

From \cite{Wi}, Theorem 4.4 and what we have observed earlier, we easily deduce the following interesting corollary  to our theorems:

\begin{cor}\label{equiv}
All artinian algebras with a given Hilbert function $H$ have SLP if and only if they all have MRP, if and only if the lex-segment algebra corresponding to $H$ has SLP, if and only if the lex-segment algebra corresponding to $H$ has MRP.
\end{cor}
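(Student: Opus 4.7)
The plan is to establish the four-way equivalence by closing a short cycle among the conditions $(1)$ all artinian algebras with Hilbert function $H$ have SLP, $(2)$ all such algebras have MRP, $(3)$ the lex-segment algebra for $H$ has SLP, and $(4)$ the lex-segment algebra has MRP. The heavy lifting has already been done: each of these properties can be reduced to a numerical condition on $H$, and the corollary should drop out by direct comparison of those conditions.

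First I would argue $(1)\Leftrightarrow(2)$ by simply comparing the criteria of Theorems \ref{mrp} and \ref{slp}: the lists of admissible $H$ are verbatim identical, so the two classes coincide. Next, for $(3)\Leftrightarrow(4)$, I would invoke two ingredients already in play in the main proofs: every lex-segment ideal is stable, and by \cite{HP}, Lemma 1.4, $x_r^d$ is a general form of degree $d$ with respect to any stable ideal. Plugging this into the reformulations of Lemma \ref{reph} collapses SLP and MRP for the lex-segment algebra onto the same identity on the Hilbert function of $A/x_r^d$, giving $(3)\Leftrightarrow(4)$.

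It remains to bridge the lex-segment conditions to the ``force all algebras'' conditions. The direction $(1)\Rightarrow(3)$ is trivial, since the lex-segment algebra itself has Hilbert function $H$. For $(3)\Rightarrow(1)$, I would cite Wiebe's Theorem 4.4 in \cite{Wi}, which characterizes the Hilbert functions whose lex-segment algebra enjoys SLP by exactly the numerical list appearing in Theorem \ref{slp}; once $(3)$ forces $H$ onto that list, Theorem \ref{slp} upgrades SLP from the single lex-segment algebra to every algebra with Hilbert function $H$. I do not expect a serious obstacle; the only matter of care is verifying that Wiebe's characterization matches Theorem \ref{slp} verbatim (including in the $r=2$ case), so that no additional case analysis is needed to close the cycle.
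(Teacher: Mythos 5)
Your proposal is correct and follows essentially the same route as the paper, which deduces the corollary from Wiebe's Theorem 4.4, the observation (via Herzog--Popescu, Lemma 1.4, and Lemma \ref{reph}) that SLP and MRP coincide for lex-segment algebras, and the matching numerical criteria of Theorems \ref{mrp} and \ref{slp}. The one point you flag --- that Wiebe's characterization agrees with the list in Theorem \ref{slp} including the $r=2$ case --- is exactly the verification the paper leaves implicit, and it goes through since in characteristic zero every codimension $2$ artinian algebra has SLP.
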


\begin{rem}
The same equivalent condition with respect to the lex-segment also holds for WLP, as proved in \cite{MZ1}, Corollary 6. There, more was actually shown to be true, namely that all algebras having Hilbert function $H$ enjoy  WLP if and only if {\em any Gotzmann} algebra having Hilbert function $H$ does. Interestingly, that stronger result, instead, does not extend here: Indeed, as proved in \cite{Wi}, Example 3.4, one has that $I'=(x^2,xy,y^3,y^2z,xz^3,yz^3,z^4)\subset R=k[x,y,z]$ is a Gotzmann ideal, and that $A=R/I'$ enjoys SLP (and therefore MRP). However, the Hilbert function of $A$ is $H: 1,3,4,3,0$, which does not satisfy the hypotheses of Theorems \ref{slp} and \ref{mrp}. Therefore, the algebra $R/I$, where $I$ is  the lex-segment ideal corresponding to $H$, does not enjoy SLP or MRP.
\end{rem}

We conclude by briefly discussing our assumption that the characteristic of the base field be zero. Not only has this been necessary in a few key arguments, but as the next simple example shows, some results are  far from being true characteristic-free.

\begin{ex}\label{eee}
In positive characteristic, SLP may fail in codimension 2, even in the simplest instances of monomial complete intersections: In fact, all algebras $A^{(p)}=k[x,y]/(x^p,y^p)$, where char($k$)$=p$, do not enjoy SLP.

This is because the $p$-th power of any linear form $L\in k[x,y]$ belongs to the vector space $\langle x^p,y^p \rangle $, and therefore $L^a$ is contained in the ideal $(x^p,y^p)$ for all $a\geq p$, i.e., the image of those $L^a$ is 0 in $A^{(p)}$. (Notice that this implies the fact that powers of linear forms do not span all of $R_a$ in characteristic $p$, unlike in characteristic zero, as we saw in Remark \ref{rr}.) Thus, for $A^{(p)}$, SLP even fails between degree 0 and any degree $a$ such that $p\leq  a\leq 2(p-1)$.

It can be also shown, interestingly, that  these algebras $A^{(p)}$, although they do not enjoy SLP,  all have MRP.
\end{ex}

More generally, from the latest research done on  WLP and SLP, there seems to consistently emerge a remarkable - and perhaps not entirely expected - difference between the results which hold in characteristic zero and those true in positive characteristic (in fact, the latter appear to be many fewer). So far, research has more systematically focused on the characteristic zero case, and indeed, techniques have been introduced and employed to study the Lefschetz Properties when char($k$)$=0$ also in previous papers (they came from Macaulay's inverse systems, as in \cite{HMNW}; or from geometry, as in \cite{HMNW,MZ1,MZ2} - the latter paper applying to WLP a technical result from \cite{MNZ}).

However, understanding  WLP, SLP  and MRP in characteristic $p$ is also a basic problem, and it will definitely deserve more specific attention in the future. It seems likely that SLP is, of the three Properties, the one which ``loses more'' in passing to positive characteristic; in particular, interestingly, we believe that - as the previous example also suggests - the class of artinian algebras enjoying MRP {\em but not} SLP is ``much larger'' in characteristic $p$ than it is in characteristic zero.\\
{\ }\\
{\bf Acknowledgements.} We thank Juan Migliore and the  referee for helpful comments.

\end{document}